\newtheorem{theorem}{Theorem}[section]
\theoremstyle{definition}
\newtheorem{definition}[theorem]{Definition}
\theoremstyle{remark}
\numberwithin{equation}{section}
\definecolor{Gray}{gray}{0.9}
\begin{document}
\setcounter{page}{1}

\begin{center}
{\LARGE \bf  A note on edge irregularity strength of Dandelion graph}
\vspace{6mm}

{\Large \bf H. M. Nagesh}\\[3mm]
Department of Science \& Humanities,\\ 
PES University - Electronic City Campus, \\
Hosur Road, Bangalore -- 560 100, India.  \\
e-mail: \url{sachin.nagesh6@gmail.com} \\[2mm]
\end{center}
\noindent

{\bf Abstract:} For a simple graph $G$, a vertex labeling $\phi:V(G) \rightarrow \{1, 2,\ldots,k\}$ is called $k$-labeling. The weight of an edge $xy$ in $G$, written $w_{\phi}(xy)$, is the sum of the labels of end vertices $x$ and $y$, i.e., $w_{\phi}(xy)=\phi(x)+\phi(y)$.
A vertex $k$-labeling is defined to be an edge irregular $k$-labeling of the graph $G$ if for every two different edges $e$ and $f$, $w_{\phi}(e) \neq w_{\phi}(f)$. The minimum $k$ for which the graph $G$ has an edge irregular $k$-labeling is called the edge irregularity strength of $G$, written $es(G)$. In this note, we find the exact value of edge irregularity strength of Dandelion graph when $\Delta(G) \geq \lceil \frac{|E(G)|+1}{2} \rceil$; and determine the bounds when $\Delta(G) < \lceil \frac{|E(G)|+1}{2} \rceil $.\\
{\bf Keywords:} \!Irregular assignment, Irregularity strength, Irregular total $k$-labeling, Edge irregularity strength, Dandelion graph. \\
{\bf 2020 Mathematics Subject Classification:} 05C38, 05C78.

\enlargethispage{\baselineskip} 
\section{Introduction} \label{sec:Intr}
Let $G$ be a connected, simple and undirected graph with vertex set $V(G)$ and edge set $E(G)$. By a labeling we mean any mapping that maps a set of graph elements to a set of numbers (usually positive integers), called \em labels\rm. If the domain is the vertex-set (the edge-set), then the labeling is called \emph{vertex labelings} (\emph{edge labelings}). If the domain is $V(G) \cup E(G)$, then the labeling is called \emph{total labeling}. Thus, for an edge $k$-labeling $\delta: E(G) \rightarrow \{1, 2,\ldots,k\}$ the associated weight of a vertex $x \in V(G)$ is $w_{\delta}(x)=\sum \delta(xy)$, where the sum is over all vertices $y$ adjacent to $x$.

Chartrand et al. \cite{4} defined irregular labeling for a graph $G$ as an assignment of labels from the set of natural numbers to the edges of $G$ such that the sums of the labels assigned to the edges of each vertex are different. The minimum value of the largest label of an edge over all existing irregular labelings is known as the \emph{irregularity strength} of $G$ and it is denoted by $s(G)$.  Finding the irregularity strength of a graph seems to be hard even for simple graphs \cite{4}.

Motivated by this, Baca et al. \cite{3} investigated two modifications of the irregularity strength of graphs, namely \emph{total edge irregularity strength}, denoted by $tes(G)$; and \emph{total vertex irregularity strength}, denoted by $tvs(G)$. Motivated by the work of Chartrand et al. \cite{4}, Ahmad et al. \cite{1} introduced the concept of edge irregular $k$-labelings of graphs.

A vertex labeling $\phi:V(G) \rightarrow \{1, 2,\ldots,k\}$ is called $k$-labeling. The weight of an edge $xy$ in $G$, written $w_{\phi}(xy)$, is the sum of the labels of end vertices $x$ and $y$, i.e., $w_{\phi}(xy)=\phi(x)+\phi(y)$. A vertex $k$-labeling of a graph $G$ is defined to be an \emph{edge irregular $k$-labeling} of the graph $G$ if for every two different edges $e$ and $f$, $w_{\phi}(e) \neq w_{\phi}(f)$. The minimum $k$ for which the graph $G$ has an edge irregular $k$-labeling is called the \emph{edge irregularity strength} of $G$, written $es(G)$. 

\section{Preliminary results}
The authors in \cite{1} estimated the bounds of the edge irregularity strength and then determined its exact value for several families
of graphs namely, paths, stars, double stars, and Cartesian product of two paths. Ahmad et al. \cite{2} determined the edge irregularity strength
of Toeplitz graphs. Tarawneh et al. \cite{7} determined the exact value of edge irregularity strength of corona product of graphs with paths. Tarawneh et al. \cite{8} determined the exact value of edge irregularity strength of disjoint union of graphs. Recently, Nagesh et al. \cite{6} determined the edge irregularity strength of line graph and line cut-vertex graph of comb graph.

The following theorem in \cite{1} establishes the lower bound for the edge irregularity strength of a graph $G$. 
\begin{theorem}
Let $G=(V,E)$ be a simple graph with maximum degree $\Delta(G)$. Then 
\begin{center} 
$es(G) \geq \max \{\lceil \frac{|E(G)|+1}{2} \rceil, \Delta(G)\}.$
\end{center} 
\end{theorem}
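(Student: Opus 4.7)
The plan is to break the maximum into its two constituent bounds $es(G) \geq \lceil \frac{|E(G)|+1}{2} \rceil$ and $es(G) \geq \Delta(G)$, and verify each one independently by a direct counting argument. Both bounds follow from pigeonhole-type reasoning, so I do not expect a serious technical obstacle; the only care needed is to pin down the exact range of attainable edge weights.

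For the first bound, I would fix an edge irregular $k$-labeling $\phi$ of $G$ and observe that since every vertex label lies in $\{1, 2, \ldots, k\}$, every edge weight $w_{\phi}(xy) = \phi(x) + \phi(y)$ lies in the $(2k-1)$-element set $\{2, 3, \ldots, 2k\}$. The edge irregularity requirement forces the $|E(G)|$ edge weights to be pairwise distinct, so the pigeonhole principle gives $|E(G)| \leq 2k - 1$, hence $k \geq \frac{|E(G)|+1}{2}$, and because $k$ is an integer, $k \geq \lceil \frac{|E(G)|+1}{2} \rceil$.

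For the second bound, I would pick a vertex $v$ realizing the maximum degree and let $u_1, \ldots, u_{\Delta(G)}$ be its neighbors. The $\Delta(G)$ edge weights $\phi(v) + \phi(u_i)$ at $v$ can only be pairwise distinct if the labels $\phi(u_1), \ldots, \phi(u_{\Delta(G)})$ themselves are pairwise distinct; since these labels are drawn from $\{1, \ldots, k\}$, this forces $k \geq \Delta(G)$. Taking the maximum of the two lower bounds yields the claimed inequality, and since the argument uses nothing beyond the definition of edge irregular $k$-labeling plus pigeonhole, no additional structural information about $G$ is required.
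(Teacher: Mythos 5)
Your proof is correct. The paper itself gives no proof of this statement --- it is quoted as a known result from Ahmad et al.\ \cite{1} --- and your two pigeonhole arguments (edge weights lie in the $(2k-1)$-element set $\{2,\ldots,2k\}$, and the neighbours of a maximum-degree vertex must receive distinct labels) are exactly the standard argument used in that reference, so there is nothing to compare beyond noting that both bounds are established cleanly and the integrality step $k \geq \lceil \frac{|E(G)|+1}{2} \rceil$ is handled properly.
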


\section{Edge irregularity strength of Dandelion graph}
The authors in \cite{5} introduced the concept of Dandelion graph while studying the Wiener inverse interval problem.
\begin{definition} A \emph{star graph}, written $s_{n-1}$, is a graph on $n$ vertices, consisting of some vertex, connected to $n-1$ leaves.
\end{definition}
\begin{definition}
The \emph{Dandelion graph}, written $D(n,l)$, is a graph on $n$ vertices, consisted of a copy of the star $S_{n-1}$ and copy of a path $p_l$ on vertices $p_0,p_1,p_2,\ldots,p_{l-1}$, where $p_{0}$ is identified with a star center.
\end{definition}
Figure 1 shows an example of $D(17,8)$. 
\vspace{2mm}
\begin{center}
\includegraphics[width=10cm]{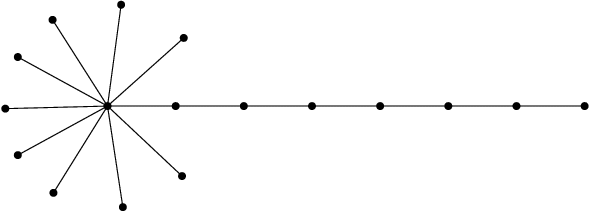}
\end{center}
\begin{center}
Figure 1
\end{center}

In the next theorem, we find the exact value of edge irregularity strength of Dandelion graph when $\Delta(G) \geq \lceil \frac{|E(G)|+1}{2} \rceil$; and determine the bounds when $\Delta(G) < \lceil \frac{|E(G)|+1}{2} \rceil $. \newpage
\begin{theorem}
Let $G=D(n,l)$ be the Dandelion graph. Then 
\begin{center}
$es(G)=n-l+1 \hspace{5mm} \text{for \ } \Delta(G) \geq \lceil \frac{|E(G)|+1}{2} \rceil$
\end{center}
For $\Delta(G) < \lceil \frac{|E(G)|+1}{2} \rceil $, \begin{center}
$\hspace{3mm}$ $\lceil \frac{n}{2} \rceil \leq es(G) \leq n-l+\lceil \frac{l}{2} \rceil$.
\end{center}
\end{theorem}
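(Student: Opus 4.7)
The plan is to first compute $|E(G)|=n-1$ and, assuming $l\geq 2$, $\Delta(G)=n-l+1$ (attained at the star center $p_0$), so that Theorem~2.1 supplies the lower bound $\max\{n-l+1,\lceil n/2\rceil\}$. This equals $n-l+1$ under the first hypothesis and $\lceil n/2\rceil$ under the second. Both halves of the theorem then reduce to exhibiting explicit edge-irregular vertex $k$-labelings meeting, respectively, $k=n-l+1$ and $k=n-l+\lceil l/2\rceil$.

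For the first case I would anchor the star by setting $\phi(p_0)=1$, $\phi(p_1)=k$, and labeling the $n-l$ leaves with the distinct values $1,2,\ldots,n-l$; the $n-l+1$ edges at $p_0$ then realize the distinct weights $\{2,3,\ldots,k+1\}$. For the remaining path I would use the plateau labeling $\phi(p_i)=k-\lfloor (i-1)/2\rfloor$ for $1\leq i\leq l-1$, so that the identity $\lfloor (i-1)/2\rfloor+\lfloor i/2\rfloor=i-1$ forces each path-edge weight $\phi(p_i)+\phi(p_{i+1})=2k-(i-1)$ for $1\leq i\leq l-2$, producing the decreasing block $\{2k-l+3,\ldots,2k\}$. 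The two weight blocks are disjoint precisely when $k\geq l-1$, which is implied by the case hypothesis $n-l+1\geq\lceil n/2\rceil$, so this labeling is edge irregular and achieves the matching upper bound.

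For the second case the lower bound follows immediately from Theorem~2.1. For the upper bound I would use the zig-zag path labeling $\phi(p_i)=\lceil l/2\rceil-\lfloor i/2\rfloor$ for $0\leq i\leq l-1$ (in particular $\phi(p_0)=\lceil l/2\rceil$), whose consecutive path-edge weights $2\lceil l/2\rceil-i$ form a block at the low end of the weight range, and then assign $\phi(v_j)=\lceil l/2\rceil+j$ to the $n-l$ leaves so that the star-edge weights $2\lceil l/2\rceil+j$ form the next consecutive block immediately above. The union of the two blocks is a set of $n-1$ consecutive integers, so all edge weights are distinct, and the largest label used is $\lceil l/2\rceil+(n-l)=n-l+\lceil l/2\rceil$.

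The main obstacle I anticipate is the first-case construction. Naive monotone or purely alternating labelings on the remaining path quickly collide with the dense block $\{2,\ldots,k+1\}$ of star weights, because any small label at a path vertex caps the subsequent edge weight from above. The plateau labeling is engineered so that each new weight $2k-(i-1)$ lies strictly above $k+1$ exactly when $k\geq l-1$, which is precisely what the case hypothesis furnishes. Parity bookkeeping in the second case is routine; for odd $l$ both weight blocks shift up by one, but the total count $n-1$ of distinct weights and the maximum label are unaffected.
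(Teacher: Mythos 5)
Your proposal is correct and follows essentially the same route as the paper: the lower bound is read off from Theorem 2.1 via $|E(G)|=n-1$ and $\Delta(G)=n-l+1$, and the upper bounds come from explicit vertex labelings that place the star-edge weights in one consecutive block and the path-edge weights, generated by equal-labeled ``plateau'' pairs along the path, in an adjacent disjoint block. If anything your write-up is tighter than the paper's: you merge its Cases 1 and 2 into a single well-defined formula $\phi(p_i)=k-\lfloor (i-1)/2\rfloor$, and you explicitly derive the disjointness condition $k\ge l-1$ from the hypothesis $n-l+1\ge\lceil n/2\rceil$, a verification the paper's proof asserts but does not carry out.
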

\begin{proof}
Let $G=D(n,l)$ be the Dandelion graph. Let us consider the vertex set and the edge set of $G$: 
\begin{align*} 
V(G) & =\{x_i: 1 \leq i \leq n-l\} \cup \{p_j: 0 \leq j \leq l-1\}, \\ 
E(G) & =\{p_{0}x_{i}, p_{j}p_{j+1}: 1 \leq i \leq n-l, 0 \leq j \leq l-2\}. 
\end{align*}

We consider the following three cases.

\medskip
\noindent \underline{Case 1:} When $\Delta(G) > \lceil \frac{|E(G)|+1}{2} \rceil$. Clearly, $|V(G)|=n$, $|E(G)|=n-1$, and the maximum degree $\Delta=n-l+1$. According to the Theorem 2.1, $es(G) \geq \max \{ \lceil \frac{n}{2} \rceil, n-l+1 \}$. Since $n-l+1 > \lceil \frac{n}{2} \rceil$, $es(G) \geq n-l+1$.  

To prove the equality, it suffices to prove the existence of an edge irregular $(n-l+1)-$labeling. Define a labeling on vertex set of $G$ as follows: 

Let $\phi: V(G) \rightarrow \{1,2,\ldots, n-l+1\}$ such that $\phi(x_i)=i$ for $1 \leq i \leq n-l$; $\phi(p_0)=1$; $\phi(p_1)=n-l+1$; $\phi(p_2)=4$; 
$\phi(p_{2i+1})=\phi(p_{2i})=n-l-j$ for $i=1,2,3,\ldots,$ and $j=0,1,2,\ldots$. 

The edge weights are as follows:
\begin{center}
$w_{\phi}(p_{0}x_{i})=i+1 \hspace{2mm} \text{for } 1 \leq i \leq n-l$;\\ $w_{\phi}(p_{0}p_{1})=n-l+2$,\\ $w_{\phi}(p_{1}p_{2})=n-l+5$;\\
$w_{\phi}(p_{2}p_{3})=n-l+4$;\\ $w_{\phi}(p_{i}p_{i+1})=2n-2l-j$ for $3 \leq i \leq l-2$ and $j=0,1,2,\ldots$.
\end{center}
On the basis of above calculations we see that the edge weights are distinct for all pairs of distinct edges. Therefore, 
$es(G)=n-l+1$. 

Figure 2 shows the edge irregularity strength of the Dandelion graph $D(13,5)$, where the maximum degree $\Delta=9$ and $|E(D(13,5))|=12$. 
\vspace{3mm}
\begin{center}
\includegraphics[width=9cm]{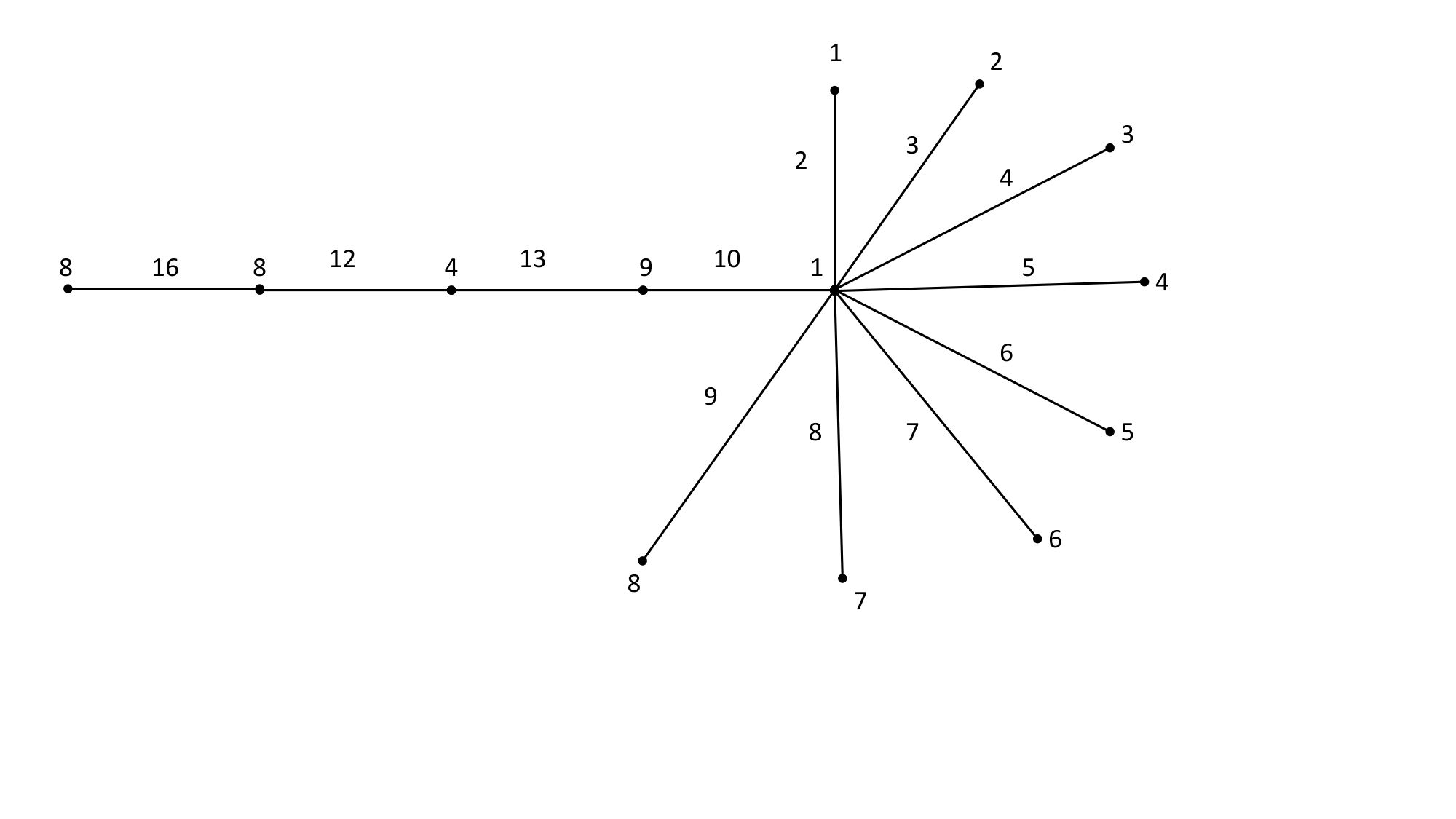}
\end{center}
\begin{center}
Figure 2
\end{center}
\noindent \underline{Case 2:} When $\Delta(G) = \lceil \frac{|E(G)|+1}{2} \rceil$. Note that $\Delta(G) = \lceil \frac{|E(G)|+1}{2} \rceil$ for the Dandelion graphs $D(2i+3,i+2)$ and $D(2i+4,i+3)$ for $i \geq1$.

Since $|E(G)|=n-1$ and the maximum degree $\Delta=n-l+1$, according to the Theorem 2.1, $es(G) \geq n-l+1$. 

To prove the equality, it suffices to prove the existence of an edge irregular $(n-l+1)-$labeling. 

Let $\phi: V(G) \rightarrow \{1,2,\ldots, n-l+1\}$ such that $\phi(x_i)=i$ for $1 \leq i \leq n-l$; $\phi(p_0)=1$; $\phi(p_1)=\phi(p_2)=n-l+1$; 
$\phi(p_{2i+1})=\phi(p_{2i+2})=n-l-j$ for $i=1,2,3,\ldots,$ and $j=0,1,2,\ldots$. 

The edge weights are as follows:
\begin{center}
$w_{\phi}(p_{0}x_{i})=i+1 \hspace{2mm} \text{for } 1 \leq i \leq n-l$;\\ $w_{\phi}(p_{0}p_{1})=n-l+2$, \\$w_{\phi}(p_{1}p_{2})=2n-2l+2$;\\
$w_{\phi}(p_{2}p_{3})=2n-2l+1$;\\ $w_{\phi}(p_{i}p_{i+1})=2n-2l-j$ for $3 \leq i \leq l-2$ and $j=0,1,2,\ldots$.
\end{center}
On the basis of above calculations we see that the edge weights are distinct for all pairs of distinct edges. Therefore, 
$es(G)=n-l+1$. 

Figure 3 shows the edge irregularity strength of the Dandelion graph $D(9,5)$, where the maximum degree $\Delta=5$ and $|E(D(9,5))|=8$. \begin{center}
\includegraphics[width=10cm]{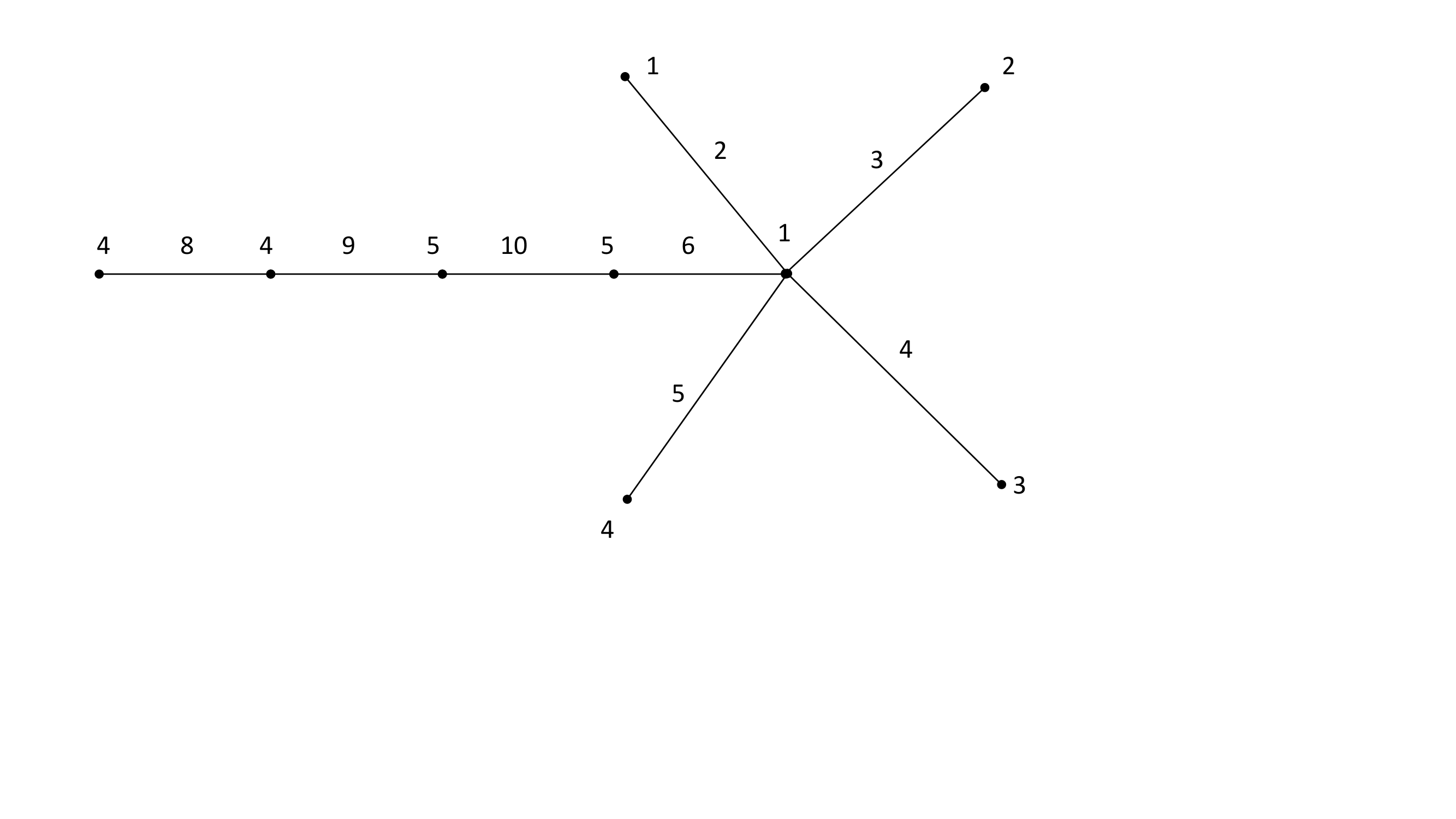}
\end{center}
\begin{center}
Figure 3
\end{center}

Finally, we have to prove that $\lceil \frac{n}{2} \rceil \leq es(G) \leq n-l+\lceil \frac{l}{2} \rceil$ for $\Delta(G) < \lceil \frac{|E(G)|+1}{2} \rceil $
\vspace{3mm}

\noindent \underline{Case 3:} When $ \Delta(G) < \lceil \frac{|E(G)|+1}{2} \rceil $. According to the Theorem 2.1, $es(G) \geq \max \{ \lceil \frac{n}{2} \rceil, n-l+1 \}$. Hence $es(G) \geq \lceil \frac{n}{2} \rceil$. For the upper bound, we define a vertex labeling $\phi$ as follows:

Let $\phi: V(G) \rightarrow \{1,2,\ldots, n-l+\lceil \frac{l}{2} \rceil\}$ such that $\phi(x_i)=i$ for $1 \leq i \leq n-l$; $\phi(p_0)=1$; $\phi(p_i)=\phi(p_{i+1})=n-l+i$ for $i=1,2,3,\ldots$. 

The edge weights are as follows:
\begin{center}
$w_{\phi}(p_{0}x_{i})=i+1 \hspace{2mm} \text{for } 1 \leq i \leq n-l$;\\ $w_{\phi}(p_{0}p_{1})=n-l+2$;\\
$w_{\phi}(p_{i}p_{i+1})=2n-2l+i+1$ for $ i=1,2,3,\ldots$.
\end{center}
On the basis of above calculations we see that the edge weights are distinct for all pairs of distinct edges. Therefore, the vertex labeling $\phi$ is an edge irregular $n-l+\lceil \frac{l}{2} \rceil-$labeling, i.e., $es(G) \leq n-l+\lceil \frac{l}{2} \rceil$ for $ \Delta(G) < \lceil \frac{|E(G)|+1}{2} \rceil $. This completes the proof.
\end{proof}
Figure 4 shows the edge irregularity strength of the Dandelion graph $D(7,5)$, where the maximum degree $\Delta=3$ and $|E(D(7,5))|=6$. \begin{center}
\includegraphics[width=10cm]{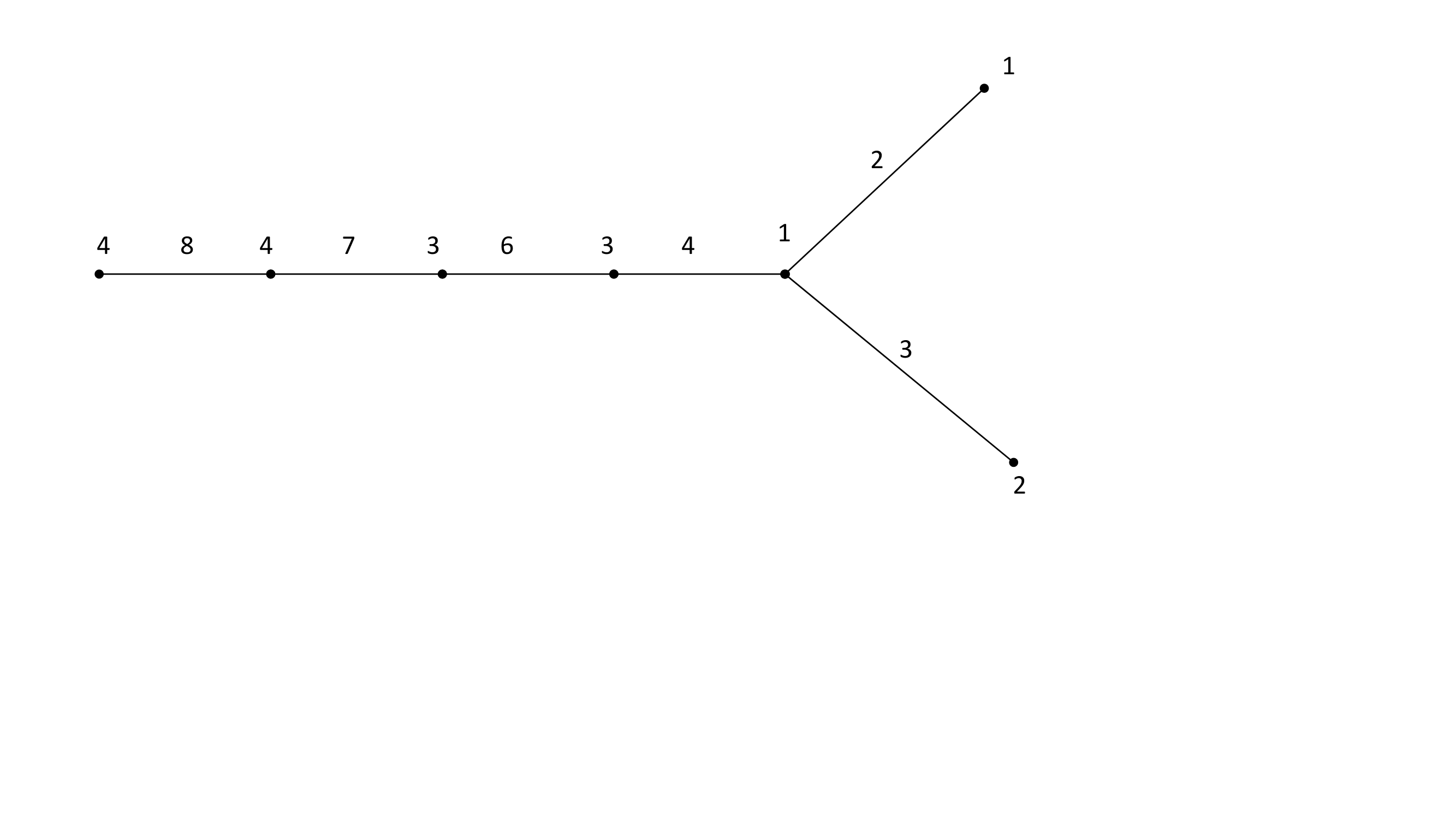}
\end{center}
\begin{center}
Figure 4
\end{center}
\section{Conclusion} 
In this note, we have found the exact value of edge irregularity strength of an important class of graph, namely, Dandelion graph $D(n,l)$, when $\Delta(G) \geq \lceil \frac{|E(G)|+1}{2} \rceil$; and determine the bounds when $ \Delta(G) < \lceil \frac{|E(G)|+1}{2} \rceil $. However, to find the exact value of edge irregularity strength of many other important class of graphs still remain open.

\makeatletter
\renewcommand{\@biblabel}[1]{[#1]\hfill}
\makeatother

\end{document}